\documentclass[a4,12pt]{article}

\usepackage{amsmath,amssymb}

\newtheorem{thm}{Theorem}
\newtheorem{prop}{Proposition}
\newtheorem{lem}{Lemma}
\newtheorem{cor}{Corollary}

\title{Quadratic generation of ideals \\ defining  nonsingular toric 3-folds}
\author{Shoetsu Ogata}

\begin{document}
\maketitle

\begin{abstract}
Let $X$ be a $\mathbb{P}^1$-bundle over a nonsingular toric surface which is a blowup $\mathbb{P}^2$
along at most 3 invarinant points or a blowup $\mathbb{F}_0=\mathbb{P}^1\times\mathbb{P}^1$
along at most 4 invarinant points.
We show that 
for an ample line bundle $L$ on $X$ the defining ideal of $X$ embedded by global sections of $L$  
is generated by elements of degree two.
\end{abstract}

\section*{Introduction}

Let $X$ be a projective algebraic variety and $L$ an ample line bundle on $X$.
Mumford\cite{Mf} defines $L$ {\it normally generated} if the natural homomorphism
$$
\phi: S:=\mbox{Sym} \Gamma(X,L) \to R:=\bigoplus_{k\ge0} \Gamma(X, L^{\otimes k})
$$
is surjective.  Then we define the ideal $I(X,L)$ as
$$
I(X,L) :=\mbox{Ker}\ \phi \subset S=\bigoplus_{k\ge0} S_k.
$$

Sturmfels\cite{St} conjectured in 1995 that for an ample line bundle $L$ on a nonsingular toric variety $X$
if $L$ is normally generated then the ideal $I(X,L)$ would be generated by elements of degree two.

In this article we give an affirmative answer to the conjecture in a class of nonsingular toric 3-folds.
It is given as Proposition~\ref{p2} in the section \ref{sct4}.

\begin{thm}\label{t1}
Let $X$ be a  toric  $\mathbb{P}^1$-bundle over a nonsingular toric surface
which is a blowup $\mathbb{P}^2$
along at most 3 invarinant points or a blowup $\mathbb{F}_0=\mathbb{P}^1\times\mathbb{P}^1$
along at most 4 invarinant points.
Let $L$ be an ample line bundle on $X$.  Then 
 the ideal $I(X,L)$ is generated by elements of degree two.
 \end{thm}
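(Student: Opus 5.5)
We translate the statement into the combinatorics of lattice polytopes. Let $P\subset M_{\mathbb{R}}\cong\mathbb{R}^3$ be the lattice polytope of $L$. Then $S_k$ has a basis of monomials in the variables $x_u$, $u\in P\cap M$, and $I(X,L)$ is the toric ideal spanned by binomials $x_{u_1}\cdots x_{u_k}-x_{v_1}\cdots x_{v_k}$ with $\sum u_i=\sum v_i$. For nonsingular toric 3-folds, normal generation is known: every ample $L$ on a nonsingular toric 3-fold of this type is normally generated, and I will quote that. The task is therefore to show that every degree-$k$ binomial with $k\ge3$ is congruent, modulo binomials of degree two, to zero.

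The standard reduction is the following. It suffices to show that for each $k\ge3$ and each lattice point $w\in kP\cap M$, the graph $G_w$ is connected. The vertices of $G_w$ are the multisets $\{u_1,\dots,u_k\}\subset P\cap M$ with sum $w$. Two vertices are joined by an edge when they differ only in two elements, replaced by two others with the same sum (a quadratic move).

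The tool for connectivity is a decomposition $w=w'+u$ with $w'\in (k-1)P$ and $u\in P$, chosen to be ``universal''. Suppose we find a fixed $u\in P\cap M$ such that every representation of $w$ can be transformed by quadratic moves into one containing $u$. Then induction on $k$ applied to $w-u\in(k-1)P$ finishes the argument. The base case $k=3$ is the same argument with the degree-two case trivial.

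To produce such a $u$ I use the $\mathbb{P}^1$-bundle structure. The projection $X\to Y$ to the surface corresponds to a linear map $\pi:M\to\mathbb{Z}$ (dual to the fibre direction). Each fibre $P\cap\pi^{-1}(t)$ is a lattice polygon $P_t$ whose normal fan is refined by the fan of $Y$. Moreover $P_t$ varies linearly in $t$: $P_t$ is a Minkowski-type interpolation between the polygons $P_{\min}$ and $P_{\max}$ associated with the two sections.

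First I sort by the coordinate $\pi$. Using quadratic moves $\{u,v\}\to\{u',v'\}$ with $\pi(u')=\lceil(\pi u+\pi v)/2\rceil$, I make the heights of $u_1,\dots,u_k$ differ pairwise by at most one. This uses that for adjacent heights, $P_s\cap M+P_t\cap M=(P_s+P_t)\cap M$ for the smooth polygons that occur, which is the normality of the 2-dimensional pieces. This reduces the problem to a statement about lattice points of Minkowski sums of polygons $P_a$ and $P_{a+1}$ whose normal fans are coarsenings of the fan of $Y$.

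Second, I need the 2-dimensional statement: for polygons $Q_1,\dots,Q_k$ whose fans are refined by the fan of $Y$, the fibre graph of $Q_1+\cdots+Q_k$ is connected by quadratic moves. I will prove this by a case analysis over the finitely many surfaces $Y$ in the list, namely $\mathbb{P}^2$ blown up in at most 3 points and $\mathbb{F}_0$ blown up in at most 4 points. These have at most 8 rays and explicitly describable nef cones. Every nef class there decomposes as a sum of a few basic nef classes (pullbacks of lines, fibres, and the hexagon/octagon generators). The connectivity can then be checked on the corresponding primitive polygons (segments, triangles, unit squares, trapezoids, hexagons) and propagated by Minkowski-sum arguments of the type above.

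The main obstacle is this last step: controlling the mixed 2-dimensional fibre sums. The difficulty is that the polygons $P_a$ and $P_{a+1}$ may have different shapes, and a quadratic move must stay inside these individual polygons rather than inside their sum. For this I would first try to peel off a vertex $u$ of $P$ lying on an edge in the fibre direction. Its existence is guaranteed by smoothness: the vertex cone is unimodular and contains the fibre direction. With such a $u$, I would show by a local exchange argument in that unimodular cone that $u$ can always be brought into a representation. The restriction on $Y$ is exactly what should make the finite list of nef-cone generators small enough for this exchange to be verified case by case.
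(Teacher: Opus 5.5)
\textbf{There is a genuine gap: the step that carries the whole theorem is not proved.}

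Your set-up is fine, and your height-balancing step is sound. All slices $P_l$ have the normal fan of $Y$ and support numbers affine in $l$. Hence $P_s+P_t=P_{s'}+P_{t'}$ for $s'=\lfloor (s+t)/2\rfloor$, $t'=\lceil (s+t)/2\rceil$, and the lattice-point identity for two lattice polygons with the same normal fan realizes each move.

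After balancing, however, every point lies in $P_a$ or $P_{a+1}$. What remains is exactly quadratic generation for the Cayley polytope $\mathrm{conv}(P_a\times\{0\}\cup P_{a+1}\times\{1\})$. That polytope corresponds to a $\mathbb{P}^1$-bundle over $Y$ polarized with $\Gamma(X,A+K_X)=0$, which is the situation of Proposition~1 and where all the real work of the paper lies. You do not prove this step; you only outline a programme for it, and the programme has concrete defects.

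\textbf{Your two-dimensional statement is false as formulated.} Take $Q_1=[0,e_1]$, $Q_2=[0,e_2]$, $Q_3=[0,e_1+e_2]$. All three have normal fans refined by the fan of $\mathbb{F}_0$ blown up in four points. Yet $w=e_1+e_2$ has exactly two representations, $(e_1,e_2,0)$ and $(0,0,e_1+e_2)$. They differ in all three entries, so no quadratic move joins them. More generally, quadratic generation is not inherited under Minkowski sums, so checking ``primitive'' polygons and ``propagating'' proves nothing.

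\textbf{A fixed vertex of $P$ cannot serve as the universal element.} The element $u$ must depend on $w$. For another vertex $v$, the point $w=kv$ has the single representation $(v,\dots,v)$, which never contains $u$.

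\textbf{The exchange argument is missing.} The ``local exchange argument \ldots\ verified case by case'' is not supplied. This is precisely where the hypothesis on $Y$ has to enter.

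\textbf{How the paper handles this step.}
\begin{itemize}
\item It uses the bound, quoted from Ogata and Sturmfels, that for a normally generated $L$ on a nonsingular toric 3-fold the ideal is generated in degree at most $3$. So only cubic relations $m_1+m_2+m_3=m'_1+m'_2+m'_3$ need treatment.
\item A parity pigeonhole on heights puts the midpoints of $[m_1,m_2]$, $[m'_1,m'_2]$ and $[m_3,m'_3]$ on integral slices.
\item Lemma~1 moves each pair into a unit lattice square. It holds for $R_1,R_2$ because their edge normals lie among $(\pm1,0)$, $(0,\pm1)$, $\pm(1,1)$, $\pm(1,-1)$; this is the only use of the restriction on $Y$.
\item Comparing the types of the three midpoints then gives explicit two-step chains of quadratic moves.
\end{itemize}
To close your proof you need an argument of this kind, or some other proof of quadratic generation for the two-slice Cayley polytope.
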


In the case of dimension two the conjecture is true from the result\cite{K2} of Koelman.
Koelman\cite{K1} also shows that any ample line bundle on any toric surface is normally generated.
In the dimension more than two ample line bundles are not always very ample. However, 
Demazure\cite{Dz} shows that any ample line bundle on a {\it nonsingular} toric variety is always very ample.
On the other hand, there exist ample line bundles on singular toric 3-folds which are very ample but 
not normally generated (see \cite{BG,Og3}).

Ogata\cite{Og1,Og2} shows that if $X$ is a polarized nonsingular toric 3-fold $(X,A)$ with $\Gamma(X,A+K_X)=0$, or if $X$ 
is a nonsingular toric 3-fold admits
a surjective morphism onto the projective line, then any ample line bundle $L$ on $X$ is normally generated.
Then we may try to check quadratic generation of ideals.

Ogata\cite{Og4} obtains a weak version of the conjecture such that if $(X,A)$ is a polarize toric 3-fold with
$\Gamma(X,A+K_X)=0$, then $X$ embedded by  any ample line bundle is the zero-set of quadratic binomials.

\section{Polarized toric varieties and Lattice polytopes}

In this section we recall the fact about toric varieties and ample line bundles on them, and corresponding
lattice polytopes from Oda's book\cite{Od} and Fulton's book\cite{Fu}.

Let $M\cong \mathbb{Z}^n$ be a free abelian group of rank $n$.  By scalar extension to real numbers $\mathbb{R}$
we have the real vector space $M_{\mathbb{R}}:= M\otimes_{\mathbb{Z}}\mathbb{R} \cong\mathbb{R}^n$.
The group ring $\mathbb{C}[M]$ of $M$ defines an algebraic torus $T:=\mbox{Spec}\ \mathbb{C}[M] \cong 
(\mathbb{C}^\times)^n$. The character group $\mbox{Hom}_{gp}(T,\mathbb{C}^\times)$ of $T$ is identified with $M$.
For $m\in M$ we denote the corresponding character as $e(m): T \to \mathbb{C}^\times$.

A normal algebraic variety $X$ of dimension $n$ is called toric if it has an algebraic torus action 
$T\times X \to X$ and an open orbit isomorphic to $T$, and if the action is compatible with the group action
$T\times T \to T$ and 
the inclusion $T\subset X$.

Next we define a lattice polytope. The convex hull of a subset $\{m_1, \dots, m_r\}\subset M$ in $M_{\mathbb{R}}$
$$
P:= \mbox{conv}\{m_1, \dots, m_r\} \subset M_{\mathbb{R}}
$$
is called a lattice polytope.

Let $L$ be an ample line bundle on a projective toric variety $X$ of dimension $n$.  Then there exists a lattice
polytope $P\subset M_{\mathbb{R}}$ of dimension $n$ such that
the set of lattice points $P\cap M$ corresponds to a basis of the space of global sections $\Gamma(X,L)$:
\begin{equation}\label{e1}
\Gamma(X,L) \cong \bigoplus_{m\in P\cap M} \mathbb{C} e(m).
\end{equation}
And
\begin{equation}\label{e2}
\Gamma(X, L\otimes \omega_X) \cong \bigoplus_{m\in (\mbox{\scriptsize Int}P)\cap M} \mathbb{C} e(m),
\end{equation}
 where $\omega_X$is the dualizing sheaf of $X$.
 
 Conversely, let $P\subset M_{\mathbb{R}}$ be a lattice polytope of dimension $n$ and $V(P)$ the set of vertices 
 of $P$. For every vertex $v\in V(P)$ set the convex cone $C_v(P):=\mathbb{R}_{\ge0}(P-v)$ with the apex $v$.
This defines the affine toric variety $U_v:=\mbox{Spec}\ \mathbb{C}[C_v(P)\cap M]$ of dimension $n$.
By gluing this $U_v$ all over $v\in V(P)$ we obtain a toric variety
$$
X = \bigcup_{v\in V(P)} U_v.
$$
And define the line bundle $L$ as
$$
\Gamma(U_v, L)=e(v)\mathbb{C}[C_v(P)\cap M].
$$
Then $L$ is ample and satisfies the relation (\ref{e1}).
Thus a polarized toric variety $(X,L)$ of dimension $n$ corresponds to a lattice polytope of the same dimension.

$X$ is nonsingular if and only if every cone $C_v(P)$ is nonsingular, that is, there exists a $\mathbb{Z}$-basis 
$\{m_1, \dots, m_n\}$ of $M$ such that
$$
C_v(P)=\mathbb{R}_{\ge0}m_1 + \dots +\mathbb{R}_{\ge0}.
$$

Moreover, for a natural number $k$ the natural homomorphism
$$
\Gamma(X,L)^{\otimes k} \to \Gamma(X, L^{\otimes k})
$$
is surjective if and only if the relation
$$
\overbrace{(P\cap M) + \dots +(P\cap M)}^k = (k P)\cap M
$$
holds.

\section{A class of toric 3-folds}\label{sct2}

Here we describe a class of nonsingular polarized toric 3-folds which includes our toric 3-folds
treated in this article.

\begin{thm}[Ogata\cite{Og1}]\label{t2}
Let $(X,A)$ be a polarized nonsingular toric 3-fold with $\Gamma(X,A+K_X)=0$.
Then $X$ is one of the following
\begin{itemize}
\item[(1)] A blowup $\mathbb{P}^3$ along at most 4 invariant points,
\item[(2)] A blowup the $\mathbb{P}^1$-bundle over $\mathbb{P}^2$ along at most 2 invariant points,
\item[(3)] A $\mathbb{P}^1$-bundle over a nonsingular toric surface $Y$.
\end{itemize}
Moreover, any ample line bundle on $X$ is projectively normal.
\end{thm}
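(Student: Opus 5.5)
This is Theorem~\ref{t2}, quoted from Ogata\cite{Og1}. The plan is to translate everything into the polytope $P$ of $A$, use the vanishing $\Gamma(X,A+K_X)=0$ to control the shape of $P$, classify, and then check normal generation combinatorially.

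\textbf{Step 1: translate the hypothesis.} By (\ref{e2}), $\Gamma(X,A+K_X)=0$ says exactly that $P$ has no interior lattice points. Since $X$ is nonsingular, every vertex cone $C_v(P)$ is generated by a $\mathbb{Z}$-basis. Consequently each edge of $P$ through $v$ is a lattice segment with primitive direction vectors $m_1,m_2,m_3$.

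\textbf{Step 2: bound the lattice width.} Fix a vertex $v$ with edge lengths $a_1,a_2,a_3$ along $m_1,m_2,m_3$. If each $a_i\ge 2$, I claim $v+m_1+m_2+m_3$ is interior. To see this, use the facet inequalities of the adjacent vertex cones; nonsingularity at each neighbouring vertex forces the facets through those vertices to stay at distance at least one. This step is the main obstacle. It needs careful control of how facets not through $v$ cut $P$. I would first handle it by looking at the facet normals, which are primitive and form a dual basis at each vertex, and showing that the lattice distance from $v+m_1+m_2+m_3$ to every facet is at least $1$.

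\textbf{Step 3: classify.} From Step 2, $P$ has lattice width $1$ in some direction, or it is "thin" in the sense of a small simplex-like shape near every vertex. There are two cases.
\begin{itemize}
\item[(a)] If $P$ lies between two adjacent lattice hyperplanes $\langle u,\cdot\rangle=0,1$, then the projection along $u$ gives a $\mathbb{P}^1$-bundle over the toric surface $Y$ of the facet polygon. Both facets are smooth polygons with the same normal fan, which is case~(3).
\item[(b)] Otherwise every edge through some vertex has length $1$ in one direction. Then $P$ is a unimodular simplex ($\mathbb{P}^3$) or a simplex $k\Delta$ with $k\le 3$ and truncated corners, or a prism truncated at corners. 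Bounding the number of truncations so that no interior point appears yields cases (1) and (2). Corner truncations correspond to blowups at invariant points, and the count is limited because two truncations must not overlap or create interior points, e.g.\ the interior point of $3\Delta$ must be cut away.
\end{itemize}

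\textbf{Step 4: projective normality.} For every ample $L$ we must show $(P\cap M)+(kP\cap M)=(k+1)P\cap M$. Every ample $L$ on such $X$ has a polytope with the same normal fan. In case~(3), slice $kP$ by the hyperplanes $\langle u,\cdot\rangle=j$. Each slice is a lattice polygon with the normal fan of $Y$ (or a degeneration of it), and Koelman\cite{K1} gives normal generation of lattice polygons. Then a lattice point of $(k+1)P$ at height $j$ decomposes as a point at height $0$ or $1$ of $P$ plus a point of $kP$, by a Minkowski-sum decomposition of the slices. In cases (1) and (2), use induction on $k$ together with a Cayley-type or triangulation argument, reducing along the projection to $\mathbb{P}^1$ coming from the blown-up structure, as in \cite{Og2}.
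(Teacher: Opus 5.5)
The paper does not prove this theorem (it is quoted from Ogata's earlier work), so your proposal has to stand on its own. It breaks at Step 2, whose claim is false. Take $P=2\Delta_3$ and $v=0$: all three edges have length $2$, yet $v+m_1+m_2+m_3=(1,1,1)$ is not even in $P$. For $P=3\Delta_3$ the same point lies on the facet $x+y+z=3$. For the prism over $2\Delta_2$ with vertical edges of length $\ge 2$ it lies on the facet $x+y=2$.

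These polytopes are smooth and have no interior lattice points, and they are exactly the shapes that cases (1) and (2) must come from. So no argument via facet normals can show that every facet stays at lattice distance $\ge 1$ from $v+m_1+m_2+m_3$. Smoothness at the neighbouring vertices does not stop a facet not through $v$ from reaching that point. As a result the dichotomy opening Step 3 has no basis. Its input, ``at every vertex some edge has length $1$'', fails for these examples. Even where it holds, getting from such local edge data to ``lattice width $1$, or a truncated simplex or prism'' is a global statement that Step 3(b) does not argue.

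What has to be done instead is to analyse the situation where $v+m_1+m_2+m_3$ is \emph{not} interior. Since $\mathrm{conv}(v,v+a_1m_1,v+a_2m_2,v+a_3m_3)\subseteq P$, this forces $\frac1{a_1}+\frac1{a_2}+\frac1{a_3}\ge 1$. So if all $a_i\ge 2$, the triple is, up to order, $(2,2,a)$ or one of $(2,3,3),(2,3,4),(2,3,5),(2,3,6),(2,4,4),(3,3,3)$. You must then take a facet not through $v$ that meets or cuts off $v+m_1+m_2+m_3$. Use unimodularity of the facet normals at the vertices $v+a_im_i$ to eliminate the spurious triples and pin down the local shape. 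The expected shapes are corners of $2\Delta_3$, of a possibly truncated $3\Delta_3$, and of the prism over $2\Delta_2$. Only then can you globalize.

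There are also smaller problems elsewhere. In Step 3(a), lattice width $1$ alone does not give case (3): one of the two slices can be a point ($\Delta_3$) or a segment (the prism over $\Delta_2$ sliced by $x=0,1$). When both slices are $2$-dimensional, you still need simplicity of $P$ to see that the side facets are trapezoids, and hence that the two polygons have the same normal fan. In Step 4, a general ample $L$ in case (3) has a polytope of height $t$, not $1$, in the fibre direction. The ingredient you need there is $(G+H)\cap M=(G\cap M)+(H\cap M)$ for lattice polygons $G,H$ with the normal fan of $Y$. That is a different statement from Koelman's normality of a single polygon. For cases (1) and (2), Step 4 gives no argument at all.
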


We describe the shape of lattice polytopes corresponding to polarized 3-folds in the theorem.

Let $Q\subset M_{\mathbb{R}}$ be the  lattice polytope corresponding to $(X,A)$ in Theorem~\ref{t2}.
The condition $\Gamma(X,A+K_X)=0$ implies 
$$
(\mbox{Int}\ Q)\cap M=\emptyset.
$$

Let $e_1,e_2,e_3$ be a $\mathbb{Z}$-basis of $M$.
Let $\Delta_3:=\mbox{conv}\{0,e_1,e_2,e_3\}$ be the basic lattice simplex.

In the case (1), $Q$ is $k \Delta_3$ with $1\le k\le3$, $2\Delta_3$ deleted
at most 1 vertex or $3\Delta_3$ deleted at most 4 vertices.

Let $M'=\mathbb{Z}e_1+\mathbb{Z}e_2$.  Let $\Delta_2=\mbox{conv}\{0,e_1, e_2\}$.

In the case (2), $Q$ is the prism over $\Delta_2$ with edges of length $a,b,c\ge1$, or
the prism over $2\Delta_2$ with edges of length $e,f,g$ such that $e-f$ and $e-g$ are even, 
or the latter deleted one vertex in the bottom or one vertex on the roof.

In the case (3), $Q$ has two parallel polygons $F_0$ and $F_1$ which have the same number of vertices
and corresponding edges are parallel.  
Refer the Figure~\ref{fig1}.

\begin{figure}[h]
 \begin{center}
 \setlength{\unitlength}{1mm}
  \begin{picture}(60,40)(20,10)
   \put(20,20){\vector(4,-1){30}}
   \put(20,20){\vector(0,1){30}}
   \put(20,20){\vector(2,1){30}}
   \put(15,20){\makebox(10,10)[bl]{$u_0$}}
   \put(53,10){\makebox(10,10)[bl]{$x$}}
   \put(15,50){\makebox(10,10)[bl]{$z$}}
   \put(50,35){\makebox(10,10)[bl]{$y$}}
   \put(40,15){\line(1,0){30}}
   \put(40,15){\line(1,5){2}}
   \put(70,15){\line(1,1){10}}
   \put(70,15){\line(-1,5){2}}
   \put(40,30){\line(6,1){20}}
   \put(40,30){\line(-1,5){2}}
   \put(15,30){\makebox(10,10)[bl]{$w_0$}}
   \put(30,5){\makebox(10,10)[r]{$u_1$}}
   \put(65,5){\makebox(10,10)[]{$u_2$}}
   \put(35,27){\makebox(10,10)[l]{$u_r$}}
   \put(35,25){\makebox(10,10)[br]{$w_1$}}
   \put(60,17){\makebox(10,10)[t]{$w_2$}}
   \put(30,35){\makebox(10,10)[tr]{$w_r$}}
   \put(50,20){\makebox(-5,15){$F_0$}}
   \put(60,30){\makebox(10,10){$F_1$}}
   \put(50,40){\makebox(15,15){$Q$}}
   \put(65,20){\makebox(20,20){$\ddots$}}
   \put(30,17){\circle*{1}}
   \put(20,30){\circle*{1}}
   \put(30,25){\circle*{1}}
   \put(20,20){\circle*{1}}
   \put(40,15){\circle*{1}}
   \put(42,24){\circle*{1}}
   \put(70,15){\circle*{1}}
   \put(68,24){\circle*{1}}
   \put(40,30){\circle*{1}}
  \put(38,39){\circle*{1}}
   \put(20,30){\line(4,-1){22}}
   \put(20,30){\line(2,1){19}}
   \put(42,24){\line(1,0){26}}
   \put(68,24){\line(1,1){10}}
   \put(38,39){\line(6,1){20}}
  \end{picture}
 \end{center}
\caption{typical $Q$ of (3)}
 \label{fig1}
\end{figure}

Toric 3-folds appeared in Theorem~\ref{t1} is in the case (3) and $Y$ is a blowup $\mathbb{P}^2$
along at most 3 invariant points, or a blowup $\mathbb{F}_0$ along at most 4 invariant points.
The corresponding lattice polygon $F_0$ is one of the following 
\begin{eqnarray*}\label{e3-1}
R_1 &=&(k \Delta_2)\cap (x+y\ge l_1) \cap (x\le l_2) \cap (y\le l_3)\\
& & \mbox{with} \quad 0\le l_1<k, l_1<l_2, l_1<l_3
\end{eqnarray*}
or
\begin{eqnarray*}\label{e3-2}
R_2 &=&\Box_{a,b} \cap (l_1\le x+y \le l_2) \cap (-k_1\le y-x\le k_2) \\
& &\mbox{with}\quad k\ge1, 
k_1\ge0, l_1\ge0, l_1<k_1, l_1<l_2, l_1<k_2, l_2+k_1>2a, l_2+k_2<2b
\end{eqnarray*}
where $\Box_{a,b}$ is the rectangle
$$
\Box_{a,b}:=[0,ae_1]\times [0,be_2] \quad \mbox{for}\quad a,b\ge1.
$$

\section{Proposition and Proof}

We state the key result of this article.

\begin{prop}\label{p1}
Let $Y$ be a blowup $\mathbb{P}^2$ along at most 3 invariant points, or a blowup $\mathbb{F}_0=
\mathbb{P}^1\times \mathbb{P}^1$ along at most 4 invariant points.
And let $X$ be a nonsingular toric 3-fold which is a $\mathbb{P}^1$-bundle over $Y$.
Assume that $A$ is an ample line on $X$ with $\Gamma(X,A+K_X)=0$.
Then the ideal $I(X,A)$ is generated by elements of degree two.
\end{prop}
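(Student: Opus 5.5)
Since $X$ is a $\mathbb{P}^1$-bundle over $Y$ and $(\mbox{Int}\,Q)\cap M=\emptyset$, the polytope $Q$ lies between the two parallel facets $F_0$ and $F_1$, and $Q\cap M=(F_0\cap M)\cup(F_1\cap M)$. In suitable coordinates $F_0\subset\{z=0\}$ and $F_1\subset\{z=1\}$, and $F_1$ is obtained from $F_0$ by moving each edge parallel to itself. So $F_1$ has the same normal fan as $F_0$: it is a polygon of type $R_1$ or $R_2$ for the same $Y$, with different parameters. I will therefore work with the semigroup ring of the Cayley-type polytope $Q=\mbox{conv}(F_0\times\{0\}\cup F_1\times\{1\})$. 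By Theorem~\ref{t2} the ring $R$ is generated in degree one, so $I(X,A)$ is the toric ideal spanned by the binomials
$$
x_{m_1}\cdots x_{m_k}-x_{m'_1}\cdots x_{m'_k}, \qquad \sum m_i=\sum m'_i,\quad m_i,m'_i\in Q\cap M.
$$
It suffices to show that every such binomial of degree $k\ge3$ lies in the ideal generated by the quadrics.

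\emph{Step 1: reduction by the $z$-grading.} For a monomial of degree $k$ the number $j$ of factors taken from $F_1$ equals the $z$-coordinate of the sum, so $j$ is fixed within each binomial. The sum then splits as $\sum_{F_0} + \sum_{F_1}\in (k-j)F_0+jF_1$. Two monomials with the same total therefore differ by how this total decomposes into a point of $((k-j)F_0+jF_1)\cap M$. Here I plan to use the mixed normality
$$
((k-j)F_0+jF_1)\cap M' = (k-j)(F_0\cap M') + j(F_1\cap M'),
$$
which holds because $F_0,F_1$ are lattice polygons with the same normal fan (it is essentially the content of projective normality in Theorem~\ref{t2}). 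The plan is then to connect any two monomials through quadratic moves of two kinds. The first kind are moves inside a single layer: quadratic relations for $F_0$ or $F_1$, available from Koelman's theorem~\cite{K2}. The second kind are mixed moves $x_ax_b=x_cx_d$ with $a,c\in F_0$ and $b,d\in F_1$, satisfying $a+b=c+d$.

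\emph{Step 2: the key lemma.} I need a combinatorial statement. Let $p=\sum_{i\le k-j}a_i+\sum_{i\le j}b_i$, and suppose the same point $p$ also arises from another tuple. Then one can apply a single mixed quadratic exchange $a_i+b_l=c+d$, with $c\in F_0$ and $d\in F_1$, such that $c$ (or $d$) becomes a common factor of both monomials. After that the degree drops and induction on $k$ applies. I will prove this via the ``mixed lattice point decomposition'' property: for $p'\in (F_0+F_1)\cap M'$ and any prescribed $c$ that is feasible for the remaining sum, there is a decomposition $p'=c+d$. This is where the explicit shapes $R_1$ and $R_2$ enter. Their edges have directions only among $e_1$, $e_2$, $e_1+e_2$ (and $e_1-e_2$ for $R_2$), so each polygon is an intersection of at most 8 half-planes in few directions. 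I would like to reduce to a ``unimodular / box-like'' property: the constraints on $c$ form intervals along these directions, and feasibility can be checked direction by direction, with integrality coming from the structure of these directions.

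\emph{Main obstacle.} The hard step is Step 2, namely showing that the set of feasible $c$ is a lattice polygon cut out by the same directions and that it contains a lattice point matching one of the given factors. For $R_1$ the directions $e_1,e_2,e_1+e_2$ are totally unimodular, so this should follow from integrality of these polyhedra. For $R_2$, however, the directions $e_1+e_2$ and $e_1-e_2$ together generate an index-$2$ sublattice, so parity issues can occur. I would first try to handle these by case analysis on the parities of $l_1,l_2,k_1,k_2$, using the inequalities $l_2+k_1>2a$ and $l_2+k_2<2b$, which guarantee that the cut corners are small relative to the box. If that fails, I would fall back on showing that the degree-three binomials are generated by quadrics and then invoke a standard degree-bound argument for Cayley sums.
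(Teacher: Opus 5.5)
\textbf{There is a genuine gap: Step 2 carries all the content of the proposition, and it is neither proved nor precisely stated.}

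Your Step 1 is sound. The slice of $kQ$ at height $j$ is $(k-j)F_0+jF_1$, so the mixed normality you want is just projective normality of $A$. Binomials whose $F_0$-parts and $F_1$-parts agree separately are handled by quadratic generation in each layer.

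Consider what Step 2 actually requires. Given $p'=a_i+b_l$ and a prescribed $c$, the point $d=p'-c$ is forced, so there is no decomposition to search for. What you need is that, for some $i,l$, one of the factors $a'_r$ of the other monomial lies in the polygon $F_0\cap(p'-F_1)$, or some $b'_s$ lies in $(p'-F_0)\cap F_1$.

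Your proposed tool is integrality of polygons cut out by totally unimodular directions. At best it says that $F_0\cap(p'-F_1)$ is a lattice polygon. That polygon always contains $a_i$, so lattice points in it exist trivially. Integrality says nothing about whether it contains a \emph{prescribed} lattice point, and that is exactly what lowers the degree. For $R_2$ you also concede that the directions $e_1\pm e_2$ break even the integrality, and you offer only the hope of a parity case analysis. So the argument stops precisely where the combinatorics begins.

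The paper supplies this missing content as follows.
\begin{enumerate}
\item It uses the known bound that for nonsingular, normally generated $(X,L)$ the ideal is generated in degree at most $\dim X=3$. So only relations $m_1+m_2+m_3=m'_1+m'_2+m'_3$ with $m_1,m_2,m'_1,m'_2\in F_0$ and $m_3,m'_3\in F_1$ matter.
\item It applies Lemma~\ref{l1} to replace each pair in $F_0$ by a pair in a unit lattice square. In case (II) it does the same for the pair $m_3,m'_3$ in $F_1$. After this, all half-differences lie in $\{0,\pm\frac12e_1,\pm\frac12e_2,\frac12(\pm e_1\pm e_2)\}$.
\item It compares the parity types of the midpoints through (\ref{e5}) and (\ref{e6}) and writes down explicit chains of two or three quadratic moves. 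The cases are: $(m_3+m'_3)/2\in M$ or not, and, within type $(\frac12,\frac12)$, parallel versus perpendicular diagonals.
\end{enumerate}

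Your fallback (degree bound, then degree three) is this same route. However, the proposal contains none of the degree-three analysis. As it stands it is a plan rather than a proof.
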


For a proof of the Proposition we describe how to express an element of the ideal $I(X,A)$.
Eisenbud and Sturmfels\cite{ES} show that the ideal of a polarized toric variety $(X,L)$ is
generated by binomials. 

For a pair of lattice points $m_1, \dots, m_r$ and $m'_1, \dots, m'_r\in Q\cap M$, if it
satisfies the relation
$$
m_1+\dots +m_r = m'_1+ \dots +m'_r,
$$
then this relation defines a binomial of degree $r$ in $I(X,A)$.

Ogata\cite{Ot} and Sturmfels\cite{St} show that for a projective toric variety $X$ of dimension $n$ if
an ample line bundle $L$ on $X$ is normally generated then the ideal $I(X,L)$ is generated by elements
of degree at most $n+1$, and Ogata\cite{Ot} characterize $(X,L)$ to need a binomial of degree $n+1$ in a
generater of $I(X,L)$.  In particular, if $X$ is  nonsingular then the ideal is generated by elements of dgree
at most $n$.

In our case, the ideal $I(X,A)$ is generated by binomials of degree at most three. 
For a proof of the Proposition, from a relation of degree 3 in $Q$
\begin{equation}\label{e4}
m_1+m_2+m_3= m'_1+m'_2+m'_3
\end{equation}
we have to deform it to a relation of degree 2 by using relations of degree 2.
For example, we have to make  relations as
$$
m_1+m_2+m_3=m'_1+m''_2 +m_3 =m'_1+ m'_2+m'_3.
$$

\bigskip

{\it Proof of Proposition}.

A lattice polytope $Q$ in the case (3) has two faces $F_0$ and $F_1$ of dimension 2, and any lattice point
in $Q\cap M$ is contained in $F_0\cap M$ or $F_1\cap M$. Hence a sum $m_1+m_2+m_3\in (3Q)\cap M$
is contained one of $(3F_0)\cap M, (2F_0+F_1)\cap M, (F_0+2F_1)\cap M$ or $(3F_1)\cap M$.
If $m_1+m_2+m_3\in 3F_0$, then the relation(\ref{e4}) is the relation in $F_0$, 
thus the problem is those in the dimension two.

We assume the sum $m_1+m_2+m_3$ is contained in $2F_0+F_1$.  Moreover, lattice points in the relation
(\ref{e4}) satisfy
$$
m_1,m_2,m'_1,m'_2 \in F_0 \quad \mbox{and} \quad m_3,m'_3\in F_1.
$$

\begin{itemize}
\item Relations in $F_0$
\end{itemize}

We need an easy lemma to treat relations in dimension two.

\begin{lem}\label{l1}
Set $\Box:=\mbox{conv}\{0,(1,0),(1,1),(1,0)\}$ the basic square. Assume that $F_0$ is $R_1$ or $R_2$ 
in the previous section. For latice points $m_1,m_2\in F_0\cap M$, there exist a lattice square $S$ isomorphic to
 $\Box$ by a parallel transform of $M$ and lattice points $m''_1,m''_2 \in S\cap F_0\cap M$ such that
 the relation $m_1+m_2=m''_1+m''_2$ holds.
\end{lem}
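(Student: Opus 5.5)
The plan is to reduce everything to the midpoint $p/2$, where $p:=m_1+m_2$, and to use the special shape of the edges of $R_1$ and $R_2$. By convexity $p/2\in F_0$. Write $p=2q+\varepsilon$ with $q\in M$ and $\varepsilon\in\{0,1\}^2$. I will take $m''_1,m''_2$ symmetric about $p/2$ and differing by a vector of length at most one in each coordinate, so that both lie in a translate of $\Box$ (the unit square with lower-left corner $q$). The candidate pairs are:
\begin{itemize}
\item $\{q,q\}$ if $\varepsilon=0$;
\item $\{q,q+\varepsilon\}$ if $\varepsilon=(1,0)$ or $(0,1)$;
\item $\{q,q+(1,1)\}$ or $\{q+(1,0),q+(0,1)\}$ if $\varepsilon=(1,1)$.
\end{itemize}
In every case the sum is $p$, so only membership in $F_0$ needs checking. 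If $\varepsilon=0$ this is immediate, since $q=p/2\in F_0\cap M$.

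The key observation is that, by the descriptions in Section~\ref{sct2}, both $R_1$ and $R_2$ are cut out by inequalities $\langle u,x\rangle\ge c$ with $c\in\mathbb{Z}$ and $u\in\{\pm(1,0),\pm(0,1),\pm(1,1),\pm(1,-1)\}$. For such $u$ we have $\langle u,p/2\rangle\ge c$, and $\langle u,p\rangle\equiv\langle u,\varepsilon\rangle \pmod 2$. Hence if $\langle u,\varepsilon\rangle$ is odd, then $\langle u,p/2\rangle\ge c+\tfrac12$. A candidate pair has the form $p/2\pm\tfrac12 d$ with $d$ its difference vector. Such a pair satisfies the inequality for $u$ provided $|\langle u,d\rangle|\le 1$ whenever $\langle u,\varepsilon\rangle$ is odd, and $\langle u,d\rangle=0$ whenever $\langle u,\varepsilon\rangle$ is even and the inequality may be tight at $p/2$.

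Consider first $\varepsilon=(1,0)$, where $d=(1,0)$ (the case $(0,1)$ is symmetric). For $u=\pm(0,1)$ we get $\langle u,d\rangle=0$. For every other allowed $u$ the first coordinate is $\pm1$, so $\langle u,\varepsilon\rangle$ is odd and $|\langle u,d\rangle|=1$. Thus both points lie in $F_0$.

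The main obstacle is $\varepsilon=(1,1)$. Here the axis directions are harmless: $\langle u,\varepsilon\rangle=\pm1$ is odd and $|\langle u,d\rangle|=1$ for both choices $d=(1,1)$ and $d=(1,-1)$. The directions $\pm(1,1)$ and $\pm(1,-1)$, however, give even values of $\langle u,\varepsilon\rangle$:
\begin{itemize}
\item the pair $d=(1,1)$ fails only if $p/2$ lies on an edge with normal $\pm(1,1)$, i.e.\ an edge with direction $(1,-1)$;
\item the pair $d=(1,-1)$ fails only if $p/2$ lies on an edge with normal $\pm(1,-1)$.
\end{itemize}
I claim these cannot both happen. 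If they did, $p/2$ would lie on two supporting lines of $F_0$ with non-parallel directions. Their intersection inside $F_0$ is then a vertex of $F_0$, hence a lattice point. But $p/2$ has half-integral coordinates, a contradiction. (Alternatively, such a vertex would have edge directions $(1,1),(1,-1)$ of determinant $2$, contradicting nonsingularity of $Y$.) So at least one of the two pairs works, which completes the case analysis and the lemma.
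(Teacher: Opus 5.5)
Your proof is correct. The paper gives no proof of Lemma~\ref{l1}: it is introduced as ``an easy lemma'' and used directly, so there is no argument to compare against, and yours supplies the missing proof. Three ingredients together settle every case:
the choice of $m''_1,m''_2$ symmetric about $p/2$ inside the unit square with corner $q$;
the parity relation $\langle u,p\rangle\equiv\langle u,\varepsilon\rangle\pmod 2$, combined with the restriction of the normals to $\pm(1,0),\pm(0,1),\pm(1,1),\pm(1,-1)$;
and, for $\varepsilon=(1,1)$, the observation that both diagonal pairs can fail only if $p/2$ is a vertex cut out by lines $x+y=c$ and $x-y=c'$.

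Two small remarks. First, your stated sufficient condition only covers the tight case when $\langle u,\varepsilon\rangle$ is even. In the $\varepsilon=(1,1)$ case you also use that when such an inequality is not tight, $\langle u,p/2\rangle\in\mathbb{Z}$, so the slack is at least $1$ and absorbs $|\langle u,d\rangle|/2=1$. You should say this explicitly.

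Second, it is good that your final contradiction rests on $F_0$ being a lattice polygon (a face of the lattice polytope $Q$), or on nonsingularity, rather than on the printed inequalities for $R_2$. As written, the condition $l_2+k_2<2b$ would make $y\le b$ redundant. It would also allow a vertex $((l_2-k_2)/2,(l_2+k_2)/2)$ cut out exactly by an $x+y$-line and an $x-y$-line, so it is presumably a typo for $l_2+k_2>2b$.

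Your argument uses only the set of edge normals, the integrality of the right-hand sides, and the absence of such a diagonal vertex. It therefore applies verbatim to the integral slices $G_l$ in Proposition~\ref{p2}, which share the normal fan of $F_0$ but are not $F_0$ itself.
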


Since $m_1+m_2=m''_1+m''_2$ is a relation of degree 2 in $Q$, we may assume that there exist lattice squares
$S_1, S_2$ such that $m_1,m_2\in S_1\cap F_0$ and $m'_1,m'_2\in S_2\cap F_0$ for lattice points in
the relation (\ref{e4}).

For lattice points $m_3,m'_3\in F_1$, we divide the situation into two cases.
\begin{itemize}
\item the case (I) : $\frac{m_3+m'_3}2 =m_0 \in M$.
\end{itemize}

In this case we note $m'_3-m_0=m_0-m_3$.  We also note the relation (\ref{e4}) means that 
the centroids of triangles of $m_1,m_2,m_3$ and $m'_1,m'_2,m'_3$ coincide.
From this we see that the vector connecting the midpoints of the segments $[m_1,m_2]$ and $[m'_1, m'_2]$
satisfies the relation
\begin{equation}\label{e5}
\frac{m_1+m_2}2 -\frac{m'_1+m'_2}2 = \frac12(m'_3-m_3) =m'_3-m_0 =m_0-m_3.
\end{equation}

Here we define a type of the midpoint as $(0,0), (0,\frac12),(\frac12,0),(\frac12,\frac12)$ according as its
coordinates are integers, or half-integers.
We note that the types of the midpoints of segments $[m_1,m_2]$ and $[m'_1,m'_2]$ coincide from 
the relation (\ref{e5}).

We divide the argument into three cases.

(a) Type $(0,0)$:
In this case $m_1=m_2$ and $m'_1=m'_2$.  From the relation (\ref{e5}) we have
$$
m_2-m'_2=m_0-m_3 \quad \mbox{and} \quad m_1-m'_1=m'_3-m_0,
$$
that is, $m_2+m_3=m'_2+m_0$ and $m_1+m_0=m'_1+m'_3$.  Thus
$$
m_1+m_2+m_3=m_1+m'_2+m_0=m'_1+m'_2+m'_3.
$$

(b) Types $(0,\frac12)$ and $(\frac12,0)$:
In this case, the segments $[m_1,m_2]$ and $[m'_1,m'_2]$ are parallel. By changing the numbers,
we have
$$
m'_1+(m'_3-m_0) =m_1 \quad \mbox{and} \quad m'_2+(m_0-m_3)=m_2,
$$
that is, $m_1+m_0=m'_1+m'_3$ and $m_2+m_3=m'_2+m_0$. 

(c) Type $(\frac12,\frac12)$:
In this case the segments $[m_1,m_2]$ and $[m'_1,m'_2]$ are parallel or perpendicular.
If they are parallel, then we get the same relations as in the case (b). We assume they are perpendicular.
We may set as
$$
m_2=m_1+(1,1) \quad \mbox{and} \quad m'_2=m'_1+(1,-1).
$$
If the segment $[m'_1,m'_2]$ after parallel transforming by $m'_3-m_0$ stays in $P$, then set
$$
m'_1+(m'_3-m_0)=m''_1 \quad \mbox{and} \quad m'_2+(m'_3-m_0)=m''_2.
$$
These make relations
$$
m'_1+m'_3=m''_1+m_0, \ m'_2+m_0=m''_2+m_3\quad\mbox{and} \quad m_1+m_2=m''_1+m''_2.
$$
Thus we have
$$
m'_1+m'_2+m'_3=m''_1+m'_2+m_0=m''_1+m''_2+m_3+m_1+m_2+m_3.
$$

Even if the segment $[m'_1,m'_2]$ after parallel transforming by $m'_3-m_0$ nor $[m_1,m_2]$ after
parallel transforming by $m_3-m_0=-(m'_3-m_0)$ are not contained in $P$, the end points of transformed segments
are contained in $P$. Set these points $\tilde{m}_1, \tilde{m}_2$ as
$$
\tilde{m}_1=m'_1+(m'_3-m_0) \ \mbox{and} \quad \tilde{m}_2=m_1-(m_0-m_3).
$$
These make relations
$$
m'_1+m'_3=\tilde{m}_1+m_0, \ m_1+m_3=\tilde{m}_2+m_0 \quad\mbox{and}\quad
 \tilde{m}_1+m'_2=m_2+\tilde{m}_2.
$$
Thus we have
$$
m'_1+m'_2+m'_3=\tilde{m}_1+m'_2+m_0=\tilde{m}_2+m_2+m_0=m_1+m_2+m_3.
$$

\begin{itemize}
\item the case (II): $\frac{m_3+m'_3}2 \notin M$
\end{itemize}

From Lemma~\ref{l1}, there exist a lattice square $S_3$ and lattice points $m_4,m'_4\in S_3\cap F_1\cap M$
such that $m_3+m'_3=m_4+m'_4$.
We have relations of vectors
$$
m_4-m_3=m'_3-m'_4 \quad \mbox{and} \quad m'_4-m_3=m'_3-m_4.
$$
We also have relations of vectors
\begin{equation}\label{e6}
\frac{m_1+m_2}2 -\frac{m'_1+m'_2}2 =\frac12(m'_3-m_3)=m'_3-\frac{m_4+m'_4}2 = \frac{m_4+m'_4}2 -m_3.
\end{equation}

We compare types of three midpoints of the segment $[m_3,m'_3]$, $[m_1,m_2]$ and $[m'_1,m'_2]$
From the relation (\ref{e6}), if the midpoint on $F_1$ is of type $(\frac12,\frac12)$, then types of the midpoints on $F_0$
are a pair of $(0,0)$ and $(\frac12,\frac12)$, or a pair of $(0,\frac12)$ and $(\frac12,0)$.
If the midpoint on $F_1$ is of type $(0,\frac12)$ or $(\frac12,0)$, then one midpoint on $F_0$ is of type $(0,0)$ or $(\frac12,\frac12)$ and another midpoint on $F_0$ is of type $(0,\frac12)$ or $(\frac12,0)$.

We may write as
$$
m'_3-\frac{m_4+m'_4}2 = (m'_3-m_4)+\frac{m_4-m'_4}2 =(m'_3-m'_4)-\frac{m_4-m'_4}2
$$
and
$$
\frac{m_1+m_2}2 -\frac{m'_1+m'_2}2 = (m_2-m'_1)+\frac{m_1-m_2}2 +\frac{m'_1-m'_2}2
=(m_1-m'_2) +\frac{m_2-m_1}2 +\frac{m'_2-m'_1}2.
$$
Comparing types of midpoints, we see that $m'_3-m_4=m_2-m'_1$ or $m'_3-m_4=m_1-m'_2$.

If $m'_3-m_4=m_2-m'_1$, then $m'_3-m'_4=m_1-m'_2$. From $m_4-m_3=m'_3-m'_4$, we have
$$
m'_1+m'_3=m_2+m_4\quad \mbox{and}\quad m_1+m_3=m'_2+m_4.
$$
Thus 
$$
m_1+m_2+m_3=m_2+m'_2+m_4 =m'_1+m'_2+m'_3.
$$

In the same way, if $m'_3-m_4=m_1-m'_2$, then $m'_3-m'_4=m_2-m'_1$.
From $m'_3-m'_4=m_4-m_3$ we have
$$
m'_2+m'_3=m_1+m_4 \quad \mbox{and} \quad m_2+m_3=m'_1+m_4.
$$
Thus 
$$
m'_1+m'_2+m'_3=m_1+m'_1+m_4=m_1+m_2+m_3.
$$

We complete the proof of Proposition~\ref{p1}.

\section{General case}\label{sct4}

We state our main result.

\begin{prop}\label{p2}
Let $Y$ be a blowup $\mathbb{P}^2$ along at most 3 invariant points, or a blowup $\mathbb{F}_0=
\mathbb{P}^1\times \mathbb{P}^1$ along at most 4 invariant points.
And let $X$ be a nonsingular toric 3-fold which is a $\mathbb{P}^1$-bundle over $Y$.
For an ample line bundle $L$ o $X$ the ideal $I(X,L)$ is generated by binomials of degree two.
\end{prop}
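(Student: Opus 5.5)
The plan is to reduce Proposition~\ref{p2} to Proposition~\ref{p1} and the two-dimensional result of Koelman. Let $P$ be the lattice polytope of $(X,L)$. Since $X$ is a $\mathbb{P}^1$-bundle over $Y$, $P$ is a ``prism-like'' polytope: in suitable coordinates the projection $\pi: M\to M'$ with kernel $\mathbb{Z}e_3$ sends $P$ onto a polygon, and $P$ is bounded by two parallel facets $F_0,F_1$ with the same normal fan (that of $Y$). Slicing $P$ by the hyperplanes $z=j$, $j=0,1,\dots,h$, gives lattice polygons $G_j$ all having the normal fan of $Y$ (or a coarsening of it), with $G_j$ interpolating linearly between $F_0$ and $F_1$. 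The key observation is that, by the normal generation results of Ogata (every ample line bundle on $X$ is normally generated, Theorem~\ref{t2} and \cite{Og2}), the ideal is generated by binomials of degree at most three. So, as in the proof of Proposition~\ref{p1}, it suffices to show that every degree-three relation $m_1+m_2+m_3=m'_1+m'_2+m'_3$ in $P\cap M$ can be connected to its other side by degree-two moves.

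First I would sort the relation by heights: the multiset of $z$-coordinates $\{z_1,z_2,z_3\}$ need not coincide with $\{z'_1,z'_2,z'_3\}$, but the sums agree. Using degree-two moves along the vertical direction, I would normalize: if $z_i\ge z_j+2$, replace $m_i,m_j$ by $m_i-e_3,m_j+e_3$ when those points remain in $P$. This is possible whenever the horizontal projections lie in the appropriate slices, and the convexity of $z\mapsto G_z$ (Minkowski linearity $G_{z}=\frac{h-z}{h}F_0+\frac{z}{h}F_1$ up to lattice rounding) is what I would use to guarantee it. After normalization all heights on each side lie in two consecutive levels $j,j+1$, and the heights on the two sides agree as multisets. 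Then the relation lives in the sub-polytope $Q_j:=\mathrm{conv}(G_j\cup G_{j+1})$, which is exactly a polytope of type (3) of height one, i.e.\ corresponds to an ample $A$ with $\Gamma(X,A+K_X)=0$ on the same (or a coarser) $\mathbb{P}^1$-bundle. The points $m_i,m'_i$ all lie in $Q_j\cap M$, and the degree-two relations supplied by Proposition~\ref{p1} for $Q_j$ are relations in $P$ too. If all three heights are equal, Koelman's theorem \cite{K2} for the polygon $G_j$ finishes the argument.

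The main obstacle is the vertical normalization step: one must show that from any degree-three relation one can reach one with heights in two consecutive levels by degree-two moves, without leaving $P$. Here the two sides may initially have different height multisets, e.g.\ $\{0,0,3\}$ versus $\{1,1,1\}$. I would first try to decompose $m_1+m_3$ (heights $0,3$) as a sum of two points at heights $1,2$. This amounts to the statement $(G_0\cap M')+(G_3\cap M')=(G_1\cap M')+(G_2\cap M')$ at the level of sums, which I would derive from normal generation of the two-dimensional slices together with the fact that the slices are Minkowski summands of one another. That fact holds because the slices share the normal fan of $Y$, and Lemma~\ref{l1}-type arguments work for the shapes $R_1,R_2$. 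A further subtlety is that intermediate slices may have degenerate (coarser) normal fans. I would handle this by checking that Proposition~\ref{p1}'s argument only used the shapes $R_1$, $R_2$ and Lemma~\ref{l1}, which persist under such degeneration.
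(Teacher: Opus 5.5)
Your route differs from the paper's, and its skeleton is sound. Degree $\le 3$ suffices. On each side, the moves $(a,b)\mapsto(a-1,b+1)$ for heights $a\ge b+2$ terminate at the unique multiset of heights in $\{j,j+1\}$ with the given sum. So both sides land in the slab $Q_j=P\cap(j\le z\le j+1)$, which is the polytope of an ample $A$ with $\Gamma(X,A+K_X)=0$ on the \emph{same} $X$. Proposition~\ref{p1} then applies as a black box, and its quadratic binomials are binomials of $I(X,L)$.

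Your worries about degenerate slices and rounding are unnecessary. Let $(d_i,1)$ be the primitive direction of the edge of $P$ leaving the vertex $v_i$ of $F_0$. Then $G_l$ has lattice vertices $v_i+l d_i$, and its edge lengths are affine in $l$ and positive at $l=0,t$. So every $G_l$ has exactly the normal fan of $Y$.

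The gap is the step you single out yourself. A move with $a-b$ odd cannot be avoided: heights $\{0,0,3\}$ against $\{1,1,1\}$ force $(0,3)\mapsto(1,2)$. Such a move needs the mixed statement
\[
(G_{a-1}+G_{b+1})\cap M'=(G_{a-1}\cap M')+(G_{b+1}\cap M').
\]
Your justification does not give this, for three reasons:
\begin{itemize}
\item Polygons with a common normal fan need not be Minkowski summands of each other. For example, $P=\{0\le z\le 2,\ z\le x\le 4,\ -z\le y\le 2\}$ is smooth over $\mathbb{F}_0$ with $F_0=[0,4]\times[0,2]$ and $F_1=[2,4]\times[-2,2]$, and neither is a summand of the other.
\item Normality of a single polygon only controls $(kG)\cap M'$.
\item Lemma~\ref{l1} concerns two points of one polygon.
\end{itemize}

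The statement is nevertheless true. It is Fakhruddin's theorem for lattice polygons with a common smooth normal fan. Alternatively, within the paper's own toolkit, apply Theorem~\ref{t2} to the Cayley polytope $C=\mathrm{conv}(G_{a-1}\times\{0\}\cup G_{b+1}\times\{1\})$:
\begin{itemize}
\item Its edges joining corresponding vertices $g_i$, $g'_i$ have directions $(g'_i-g_i,1)$. Together with the unimodular edge directions of $G_{a-1}$ at $g_i$, these form a $\mathbb{Z}$-basis.
\item So $C$ is smooth of height one with no interior lattice points, hence normal by Theorem~\ref{t2}.
\item Since $(2C)\cap(z=1)=G_{a-1}+G_{b+1}$, every lattice point there splits as a point of $G_{a-1}$ plus a point of $G_{b+1}$.
\end{itemize}
With this inserted, your argument is complete.

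The paper avoids mixed sums altogether. It pairs two of $m_1,m_2,m_3$ whose $z$-coordinates have the same parity. By convexity their midpoint lies in a single integral slice $G_{l_1}$, so only normality of that one polygon plus Lemma~\ref{l1} is needed to move the pair into a unit square at level $l_1$. It does the same for $m'_1,m'_2$, and for $m_3,m'_3$ at level $l_3$. It then reruns the midpoint-type case analysis of Proposition~\ref{p1} with three different levels.

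So the paper's parity trick needs only single-polygon normality, but it must re-verify the case analysis at arbitrary heights, which the paper only asserts. Your balancing needs the stronger mixed normality of two slices, but in return it reduces Proposition~\ref{p2} cleanly to Proposition~\ref{p1} exactly as stated.
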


{\it Proof}.
Let $P\subset M_{\mathbb{R}}$ be the lattice polytope corresponding to $(X,L)$.
$P$ has two big faces $F_0$ and $F_1$ parallel to each other. By taking a basis of $M$,
assume $F_0$ sits on the $xy$-plane $(z=0)$ and $F_1$ is on the plane $(z=t>0)$.
The polygon $F_0$ is $R_1$ or $R_2$ described in the section~\ref{sct2}.
For $0\le l\le t$ set $G_l:=P\cap(z=l)$. Then $G_l$ has the same number of vertices as those of $F_0$
and corresponding edges are parallel. In particular, each $G_l$ is nonsingular.

Assume that lattice points $m_1,m_2,m_3,m'_1,m'_2,m'_3\in P\cap M$ satisfy the relation
\begin{equation}\label{e7}
m_1+m_2+m_3= m'_1+m'_2+m'_3.
\end{equation}
Consider $z$-coordinates of $m_1,m_2,m_3$.  Those of at least two are of the same parity.
By a change of numbering set $m_1, m_2$ have the same parity of those $z$-coordinates.
Then the $z$-coordinate of the midpoint $\frac{m_1+m_2}2$ is an integer $l_1$.  That is,
$$
\frac{m_1+m_2}2 \in G_{l_1}.
$$
In the same way, set $m'_1,m'_2$ have the same parity of those $z$-coordinates.

From
$$
m_1+m_2 \in (2G_{l_1})\cap M =(G_{l_1}\cap M) +(G_{l_1}\cap M),
$$
there exist lattice points $\tilde{m}_1, \tilde{m}_2\in G_{l_1}\cap M$ such that $m_1+m_2=
\tilde{m}_1+\tilde{m}_2$. Applying Lemma~\ref{l1}, there exist a lattice square $S_1\subset (z=l_1)$
and lattice points $m''_1,m''_2\in S_1\cap G_{l_1}\cap M$ such that 
$$
m_1+m_2=\tilde{m}_1+\tilde{m}_2 =m''_1+m''_2.
$$
For a proof of the proposition, we may assume $m_1,m_2 \in S_1\cap G_{l_1}\cap M$.
In the same way, we may assume that there exists a lattice square $S_2\subset (z=l_2)$ and
$m'_1,m'_2 \in S_2\cap G_{l_2}\cap M$.

From the relation (\ref{e7}), we have a relation of vectors
\begin{equation}
\frac{m_1+m_2}2 -\frac{m'_1+m'_2}2 =\frac{m'_3-m_3}2.
\end{equation}
Then we see that the $z$-coordinate of the midpoint $\frac{m_3+m'_3}2$ is an integer $l_3$.
From
$$
m_3+m'_3 \in (2G_{l_3}\cap M =(G_{l_3}\cap M) +(G_{l_3}\cap M),
$$
there exist lattice points $\tilde{m}_3, \tilde{m}'_3 \in G_{l_3}\cap M$ such that $m_3+m'_3=
\tilde{m}_3+\tilde{m}'_3$.  From Lemma~\ref{l1} again, there exist a lattice square $S_3\subset (z=l_3)$
and lattice points $m_4,m'_4\in S_3\cap G_{l_3}\cap M$ such that $\tilde{m}_3+\tilde{m}'_3=m_4+m'_4$.

By the same argument of the proof of Proposition~\ref{p1}, we complete the proof.  \hfill $\Box$

\bigskip

\begin{cor}
Let $X$ be a nonsingular toric 3-fold in the cases (1) and (2) listed in Theorem~\ref{t2}.
For an ample line bundle $L$ on $X$ the ideal $I(X,L)$ is generated by binomials of degree two.
\end{cor}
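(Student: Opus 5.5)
The plan is to reduce the Corollary to Proposition~\ref{p2} and to known low-dimensional or explicit results, case by case along the list in Theorem~\ref{t2}. Throughout we use the same mechanism as before. By \cite{ES} the ideal is generated by binomials. By \cite{Ot,St} and the nonsingularity of $X$, only relations of degree three, $m_1+m_2+m_3=m'_1+m'_2+m'_3$ with lattice points in $P\cap M$, must be reduced to quadratic ones. Normal generation of $L$ is given by \cite{Og1,Og2}.

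\emph{Case (2).} Here $X$ is the $\mathbb{P}^1$-bundle over $\mathbb{P}^2$ blown up in at most two invariant points. Each such blowup is again a toric $\mathbb{P}^1$-bundle, this time over a blowup of $\mathbb{P}^2$ in at most two invariant points. I would check this on the fan: blowing up a fixed point lying on a section of the bundle corresponds to cutting a vertex of the prism. For the prism shapes listed for $Q$ one sees that a vertex cut on the bottom or roof, compatible with the fibration, is the same as truncating a vertical edge. The cross sections $G_l$ then stay combinatorially constant with parallel edges, and $F_0$ becomes a polygon of type $R_1$. If this identification holds, Proposition~\ref{p2} applies directly. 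If a cut is not of that form, I would instead use the slicing argument of Proposition~\ref{p2}. That argument needs only three facts:
\begin{itemize}
\item every horizontal slice $G_l$ is a nonsingular polygon;
\item every slice satisfies Lemma~\ref{l1};
\item every slice has $(2G_l)\cap M=(G_l\cap M)+(G_l\cap M)$.
\end{itemize}
The last holds for any lattice polygon by \cite{K1}.

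\emph{Case (1).} Here $X$ is $\mathbb{P}^3$ blown up in at most four invariant points, so $P$ is a simplex with some vertices truncated. Choose a coordinate $z$ along an edge direction so that $P$ is sliced into polygons $G_l=P\cap(z=l)$. These slices are dilated triangles with some corners cut, or trapezoids, so each is of type $R_1$ (or $R_2$). Apply the same parity argument as in Proposition~\ref{p2}: two of the $m_i$ have $z$-coordinates of equal parity, so their midpoint lies in some $G_{l_1}$, and likewise for two of the $m'_i$. Then $m_3+m'_3\in 2G_{l_3}$ for an integer $l_3$. Replace pairs by pairs in lattice squares using Lemma~\ref{l1}, and finish with the midpoint-type case analysis (I)/(II) from the proof of Proposition~\ref{p1}.

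\emph{Main obstacle.} The hard step is that case analysis. It used the fact that corresponding edges of the slices are parallel, so a segment in $G_{l_1}$ translated by $m'_3-m_0$ lands in $G_{l_2}$ (the ``end points stay in $P$'' step in (c)). For a simplex-like $P$ the slices shrink, so translated squares may exit $P$. To handle this I would first try to choose the slicing direction so that the moving point is translated toward the larger slice. I would then verify, vertex by vertex, that the cut corners are unimodular, which keeps the needed squares inside $P$.
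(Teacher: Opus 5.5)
Your skeleton is the paper's proof: slice $P$ into parallel cross-sections $G_l$ of type $R_1$ or $R_2$, then rerun the parity argument, Lemma~\ref{l1} and the case analysis of Proposition~\ref{p1}. Two parts of your write-up do not hold up, however.

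\textbf{Case (2).} Cutting off a vertex (blowing up a fixed point, exceptional divisor $\mathbb{P}^2$) is \emph{not} the same as truncating a vertical edge (blowing up a fibre curve). Your identification already fails for a single point. The cut creates a triangular facet and turns the two side facets through that vertex into pentagons. The polytope of a toric $\mathbb{P}^1$-bundle over a surface is combinatorially a prism over a polygon, and no such prism has both a triangular and a pentagonal facet. So Proposition~\ref{p2} never applies directly, and case (2) rests entirely on your fallback. There you only list what the slices must satisfy, without checking it, and it is not automatic:
\begin{itemize}
\item For the listed prism $\{x,y\ge0,\ x+y\le1,\ 0\le z\le 1+2x\}$ (vertical edges $1,3,1$), the horizontal slice $z=2$ has vertices $(\frac12,0),(1,0),(\frac12,\frac12)$, so it is not a lattice polygon.
\item Nonsingularity of a slice does not give Lemma~\ref{l1}. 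It fails for $\mbox{conv}\{(0,0),(1,0),(1,6),(0,1)\}$ with $m_1=(0,0)$, $m_2=(1,6)$: the only other splitting is $(0,1)+(1,5)$.
\end{itemize}
The paper's proof rests precisely on choosing the slicing direction so that every $G_l$ is of type $R_1$ or $R_2$. You do not supply this for case (2).

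\textbf{Your ``main obstacle''.} It is misdiagnosed. Your remedy (slicing ``toward the larger slice'', unimodular corner cuts) targets a non-problem, while the step you yourself call the hard one stays open. No point is ever transported into a different slice. Since $m'_3-m_0=\frac{m_1+m_2}2-\frac{m'_1+m'_2}2$ has $z$-component $l_1-l_2$, in case (c) with $m_2=m_1+(1,1,0)$ and $m'_2=m'_1+(1,-1,0)$ one gets
\[
m'_1+(m'_3-m_0)=m_1+(0,1,0),\quad m'_2+(m'_3-m_0)=m_1+(1,0,0),\quad m_1-(m_0-m_3)=m'_1-(0,1,0).
\]
So the ``translated segments'' are just the other diagonals of $S_1\subset(z=l_1)$ and $S_2\subset(z=l_2)$. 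Cases (a), (b) and (II) use only the $m_i$, $m'_i$, the point $m_0\in P$ (by convexity) and $m_4\in G_{l_3}$. Hence the shrinking of the slices in case (1) is irrelevant. Everything the argument needs is intra-slice:
\begin{itemize}
\item normality of $G_l$;
\item Lemma~\ref{l1} for $G_l$;
\item that the appropriate vertices of a unit square with a diagonal in $G_l$ also lie in $G_l$.
\end{itemize}
This is why the paper needs each $G_l$ only to be of type $R_1$ or $R_2$, not the slices to be parallel. With this observation your case (1) argument becomes the paper's.
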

{\it Proof}.
If $X$ is the projective 3-space $\mathbb{P}^3$, the statement is trivially true.

Assume $X$ is not the projective 3-space.
Let $P\subset M_{\mathbb{R}}$ be a lattice polytope corresponding to $(X,L)$.
By a change of a basis of $M$ and using a parallel transform, we may set $P\subset (0\le z\le t)$
and each cross section $G_l:=P\cap(z=l)$ ($0\le l\le t$) is a lattice polygon.
Then $G_l$ is $R_1$ or $R_2$ described in the section \ref{sct2}.
Thus the argument of the proof of Proposion~\ref{p2} works.  \hfill $\Box$

\end{document}